\setheadfoot{\onelineskip}{2\onelineskip} 
  \setlist{nosep}
  \theoremstyle{theorem}
  \newtheorem{theorem}[section]{Theorem}
  \newtheorem{proposition}[section]{Proposition}
  \newtheorem{corollary}[section]{Corollary}
  \newtheorem{lemma}[section]{Lemma}
  \theoremstyle{definition}
  \newtheorem{definition}[section]{Definition}
  \newtheorem*{axiom*}{Axiom}
  \theoremstyle{remark}
  \newtheorem{example}[section]{Example}
  \newtheorem{remark}[section]{Remark}
\newcommand{\adjr}[5][30pt]{
\begin{tikzcd}[ampersand replacement=\&, column sep=#1]
  #2\ar[r, shift left=6pt, "#3"]\&
  #5\ar[l, shift left=6pt, "#4"]
  \ar[l, phantom, "\scriptstyle\top"]
\end{tikzcd}
}
\DeclareSymbolFont{stmry}{U}{stmry}{m}{n}
\DeclareMathSymbol\fatsemi\mathop{stmry}{"23}
\DeclareFontFamily{U}{mathx}{\hyphenchar\font45}
\DeclareFontShape{U}{mathx}{m}{n}{
      <5> <6> <7> <8> <9> <10>
      <10.95> <12> <14.4> <17.28> <20.74> <24.88>
      mathx10
      }{}
\DeclareSymbolFont{mathx}{U}{mathx}{m}{n}
\DeclareMathAccent{\widecheck}{0}{mathx}{"71}
\DeclareMathOperator*{\colim}{colim}
\DeclareMathOperator{\ob}{Ob}
\newcommand{\cat}[1]{\mathcal{#1}}
\newcommand{\Cat}[1]{\mathsf{#1}}
\newcommand{\To}[1]{\xrightarrow{#1}}
\newcommand{\op}{^\tn{op}}
\newcommand{\tn}[1]{\textnormal{#1}}
\newcommand{\LMO}[2][over]{\ifthenelse{\equal{#1}{over}}{\overset{#2}{\bullet}}{\underset{#2}{\bullet}}}
\newcommand{\nn}{\mathbb{N}}
\newcommand{\inv}{^{-1}}
\newcommand{\smset}{\Cat{Set}}
\newcommand{\smcat}{\Cat{Cat}}
\newcommand{\fin}{\Cat{Fin}}
\newcommand{\cont}{\Cat{Cont}}
\newcommand{\bun}{\Cat{Bun}}
\newcommand{\zero}[1]{#1_\text{zc}}
\newcommand{\yon}{\mathcal{y}}
\newcommand{\poly}{\Cat{Poly}}
\newcommand{\dir}{\Cat{Dir}}
\newcommand{\mdot}{{\cdot}}
\newcommand{\qqand}{\qquad\text{and}\qquad}
\begin{document}

\title{Dirichlet Polynomials}
\title{Dirichlet Polynomials form a Topos}

\author{David I.\ Spivak%
\and David Jaz Myers
}

\maketitle
\begin{abstract}
One can think of power series or polynomials in one variable, such as $P(\yon)=2\yon^3+\yon+5$, as functors from the category $\smset$ of sets to itself; these are known as polynomial functors. Denote by $\poly_\smset$ the category of polynomial functors on $\smset$ and natural transformations between them. The constants $0,1$ and operations $+,\times$ that occur in $P(\yon)$ are actually the initial and terminal objects and the coproduct and product in $\poly_\smset$. 

Just as the polynomial functors on $\smset$ are the copresheaves that can be written as sums of representables, one can express any Dirichlet series, e.g.\ $\sum_{n=0}^\infty n^\yon$, as a coproduct of representable presheaves. A Dirichlet polynomial is a finite Dirichlet series, that is, a finite sum of representables $n^{\yon}$. We discuss how both polynomial functors and their Dirichlet analogues can be understood in terms of bundles, and go on to prove that the category of Dirichlet polynomials is an elementary topos.

\end{abstract}

\chapter{Introduction}\label{chap.intro}

Polynomials $P(\yon)$ and finite Dirichlet series $D(\yon)$ in one variable $\cat{y}$, with natural number coefficients $a_i\in\nn$, are respectively functions of the form
\begin{equation}\label{eqn.poly_dir_simple}
\begin{aligned}
  P(\yon)&=a_n\yon^n+\cdots+a_2\yon^2+a_1\yon^1+a_0\yon^0,\\
  D(\yon)&=a_n n^\yon+\cdots+a_22^\yon+a_11^\yon+a_00^\yon.
\end{aligned}
\end{equation}
The first thing we should emphasize is that the algebraic expressions in \eqref{eqn.poly_dir_simple} can in fact be regarded as \emph{objects in a category}, in fact two categories: $\poly$ and $\dir$. We will explain the morphisms later, but for now we note that in $\poly$, $\yon^2=\yon\times\yon$ is a product and $2\yon=\yon+\yon$ is a coproduct, and similarly for $\dir$. The operators---in both the polynomial and the Dirichlet case---are not just algebraic, they are category-theoretic. Moreover, these categories have a rich structure. 

The category $\poly$ is well studied (see \cite{kock2012polynomial}). In particular, the following are equivalent:
\begin{theorem}{\cite{kock2012polynomial}}\label{thm.poly_equiv}
For a functor $P\colon \fin \to \fin$, the following are equivalent:
\begin{enumerate}
    \item $P$ is polynomial.
    \item $P$ is a sum of representables.
    \item $P$ preserves connected limits -- or equivalently, wide pullbacks.
\end{enumerate}
\end{theorem}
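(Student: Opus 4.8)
The plan is to establish $(1)\Leftrightarrow(2)$ together with the two implications ``$(2)\Rightarrow P$ preserves connected limits'' and ``$P$ preserves wide pullbacks $\Rightarrow(2)$''. Since every wide pullback is itself a connected limit, these two implications sandwich the two formulations of $(3)$ and force all three conditions to coincide. The equivalence $(1)\Leftrightarrow(2)$ is essentially definitional: a one-variable polynomial functor is by construction of the form $X\mapsto\coprod_{b\in B}X^{E_b}$ for a map of finite sets $p\colon E\to B$, and since $X^{E_b}=\Hom(E_b,X)$ this is exactly a finite sum of representables $\coprod_{b\in B}\Hom(E_b,-)$; conversely any finite sum $\coprod_i\Hom(A_i,-)$ arises this way from the bundle $\coprod_i A_i\to B$ with $B$ the (finite) indexing set.

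For $(2)\Rightarrow(3)$ I would combine two facts. First, each representable $\Hom(A,-)$ preserves all limits, connected ones included. Second, in $\smset$ coproducts commute with connected limits: given a connected diagram whose value at each vertex is a coproduct $\coprod_i S_i(j)$, compatibly in the vertex $j$, any cone over it must---by connectedness of the indexing category---select a \emph{single} summand index $i$ uniformly across all vertices, so the limit distributes over the coproduct. Applying this to $P=\coprod_i\Hom(A_i,-)$ shows $P$ preserves connected limits. This is precisely the lextensivity of $\smset$, and it is the one place where connectedness of the diagram is essential: it fails for disconnected limits such as binary products.

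The substantive direction is ``wide pullbacks $\Rightarrow(2)$''. Given $x\in P(n)$, I would first define its \emph{support}: the least subset $A\ss n$ for which $x$ lies in the image of $P(A)\to P(n)$. Existence of a smallest such $A$ is exactly where pullback preservation enters---if $x\in\im P(S_1)$ and $x\in\im P(S_2)$ then, applying $P$ to the pullback square exhibiting $S_1\cap S_2$ and using that $P$ preserves it, one obtains $x\in\im P(S_1\cap S_2)$; as the subsets of the finite set $n$ form a finite lattice, iterating binary intersections suffices here, the full strength of infinitary wide pullbacks being needed only when the base is an arbitrary set. Call $a\in P(A)$ \emph{generic} when its support is all of $A$. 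The support construction then shows every $x$ is hit by a generic element along the inclusion $A\hookrightarrow n$, yielding a surjection $\coprod_{[(A,a)]}\Hom(A,-)\surj P$ whose indexing runs over isomorphism classes of generic elements and which sends $f\colon A\to X$ to $P(f)(a)$.

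The main obstacle is to prove this surjection injective, i.e.\ that the generic element hitting a given $x$, together with the map realizing it, is \emph{unique}. Concretely, if $f,g\colon A\to X$ both satisfy $P(f)(a)=P(g)(a)$ with $a$ generic, I want $f=g$; the natural argument applies $P$ to the equalizer $E=\{\,a'\in A\mid f(a')=g(a')\,\}\hookrightarrow A$, whence $a\in\im P(E)$ forces $E=A$ by genericity. This uses preservation of \emph{equalizers}, and the full uniqueness analysis needs connected-limit preservation beyond bare wide pullbacks---precisely the delicate point separating the two formulations of $(3)$ and the crux of the Carboni--Johnstone analysis. I therefore expect the real work to lie in bootstrapping wide-pullback preservation to the connected-limit preservation that uniqueness demands---equivalently, in checking that each connected component of the category of elements $\mathrm{el}(P)$ has an initial object---after which the canonical presentation $P\iso\colim_{\mathrm{el}(P)}\yon$ splits as the asserted sum of representables.
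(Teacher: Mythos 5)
First, a point of context: the paper does not prove this theorem at all---it is imported from \cite{kock2012polynomial}---and the only result of this type proved in the paper is the Dirichlet analogue, \cref{thm.dir_equiv}. The trick used there (write a finite set $X$ as the wide pushout of its points, so that a contravariant $D$ preserving wide pushouts satisfies $D(X)\iso\bun(X!,D(0!))$ directly) has no covariant counterpart, since $X$ is not usefully a wide \emph{pullback} of anything simpler; so you are right that the covariant case requires the support/generic-element analysis of Carboni--Johnstone and Kock. Your overall architecture is the correct one: $(1)\Leftrightarrow(2)$ is essentially definitional given \cref{def.poly_dir_obs}, your $(2)\Rightarrow{}$(connected limits) via representables preserving limits plus connected limits commuting with coproducts in $\smset$ is sound (and mirrors the first half of the paper's proof of \cref{thm.dir_equiv}), and the sandwich strategy correctly reduces everything to (wide pullbacks ${}\Rightarrow(2)$).

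The genuine gap is exactly where you flag it yourself: you never close the uniqueness/injectivity step, writing instead that you ``expect the real work to lie in bootstrapping wide-pullback preservation to the connected-limit preservation that uniqueness demands.'' That bootstrap \emph{is} the content of the hard direction; leaving it as an expectation means the implication (wide pullbacks ${}\Rightarrow(2)$) is not established and the sandwich does not close. The missing observation is elementary, though: wide-pullback preservation already implies equalizer preservation. Since $X\times X$ is the binary wide pullback of $X\to 1\from X$, one has $P(X\times X)\iso P(X)\times_{P(1)}P(X)$; the equalizer $E\ss A$ of $f,g\colon A\to X$ is the pullback of $\pair{f,g}\colon A\to X\times X$ along the diagonal, which $P$ also preserves; and chasing through the identification (both $P(\pair{f,g})$ and $\pair*{P(f),P(g)}$ have the same composites with the two projections) shows that $P(E)\to P(A)$ is the equalizer of $P(f)$ and $P(g)$. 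With that in hand your argument completes: if $a\in P(A)$ is generic and $P(f)(a)=P(g)(a)$, then $a$ lifts to $P(E)$, so $E=A$ by minimality of the support and $f=g$; a similar lifting through $P(A\times_X A')\iso P(A)\times_{P(X)}P(A')$ gives uniqueness of the generic pair $(A,a)$ over a given element up to isomorphism, and your canonical comparison map $\sum\yon^{A}\to P$ is then bijective. Without at least this equalizer step spelled out, the proof is incomplete at its crux.
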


In \cref{thm.dir_equiv} we prove an analogous result characterizing Dirichlet polynomials:
\begin{theorem}
For a functor $D\colon\fin\op \to \fin$, the following are equivalent:
\begin{enumerate}
    \item $D$ is a Dirichlet polynomial.
    \item $D$ is a sum of representables.
    \item $D$ sends connected colimits to limits -- or equivalently, $D$ preserves wide pushouts.
\end{enumerate}
\end{theorem}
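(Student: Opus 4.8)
The plan is to prove the cycle $(1)\Rightarrow(2)\Rightarrow(3)\Rightarrow(1)$, in close analogy with \cref{thm.poly_equiv}, exploiting the duality between the representable copresheaves $\fin(S,-)$ appearing there and the representable presheaves $\fin(-,S)$ here. The equivalence $(1)\iff(2)$ I expect to be essentially definitional: the term $n^\yon$ is by definition the value $X\mapsto n^{|X|}$, and since $|\fin(X,S)|=|S|^{|X|}$ this is exactly the representable presheaf $\fin(-,S)$ for any $S$ with $|S|=n$. A finite Dirichlet series is then literally a finite coproduct of such representables, with finiteness of the coproduct corresponding to $D$ landing in $\fin$ rather than $\smset$. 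So the real content is $(2)\iff(3)$.

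For $(2)\Rightarrow(3)$, I would first recall the standard fact that each representable presheaf $\fin(-,S)$ carries every colimit in $\fin$ to the corresponding limit, since $\fin(\colim_j X_j,S)\iso\lim_j\fin(X_j,S)$. Then I would invoke the key lemma that in $\smset$ (hence in $\fin$) coproducts commute with connected limits: an element of $\lim_j\coprod_i A_{ij}$ is a compatible family $(a_j)_j$, and connectedness of the indexing diagram forces the coproduct-component of $a_j$ to be independent of $j$, yielding $\lim_j\coprod_i A_{ij}\iso\coprod_i\lim_j A_{ij}$. Combining these, a finite sum $\coprod_i\fin(-,S_i)$ sends a connected colimit $\colim_j X_j$ to $\coprod_i\lim_j\fin(X_j,S_i)\iso\lim_j\coprod_i\fin(X_j,S_i)$, which is precisely the claim, and finiteness keeps us in $\fin$. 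The restriction to \emph{connected} colimits is visibly necessary, since the empty colimit $\emptyset$ is sent to $\coprod_i 1=D(\emptyset)$, generally not terminal.

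For $(3)\Rightarrow(2)$, which is the heart of the argument, the plan is to reconstruct $D$ directly. The crucial geometric observation is that every finite set $X$ is a \emph{wide pushout}: it is the colimit of the connected diagram consisting of one copy of the unique map $\emptyset\to 1$ for each element $x\in X$, all sharing the apex $\emptyset$ (gluing $|X|$ singletons along the empty set adds nothing, returning $X$). Applying $D$, which sends this wide pushout to the corresponding wide pullback, gives $D(X)\iso$ the wide pullback of the constant family $\{D(1)\xrightarrow{p}D(\emptyset)\}_{x\in X}$, where $p=D(\emptyset\to 1)$. Writing $I:=D(\emptyset)$ and $S_i:=p^{-1}(i)\ss D(1)$ for $i\in I$, this wide pullback is $\{(d_x)_{x\in X}\in D(1)^X: p(d_x)\text{ is independent of }x\}\iso\coprod_{i\in I}S_i^X=\coprod_{i\in I}\fin(X,S_i)$. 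Since $I=D(\emptyset)$ and each $S_i$ are finite, this exhibits $D$ as a finite sum of representables, which is a Dirichlet polynomial by $(2)\Rightarrow(1)$.

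The main obstacle I anticipate is not the cardinality bookkeeping but the \emph{naturality} of the last isomorphism in $X$: I must check that the pointwise identification $D(X)\iso\coprod_i\fin(X,S_i)$ assembles into a natural isomorphism of functors. I would do this by exhibiting the comparison map explicitly, sending $d\in D(X)$ to the pair consisting of its image in $I=D(\emptyset)$ under $D(\emptyset\to X)$ together with the family $(D(x)(d))_{x\in X}\in D(1)^X$ indexed by the points $x\colon 1\to X$, and observing that this is exactly the canonical comparison into the wide pullback, hence an isomorphism precisely because $D$ preserves the wide pushout, and manifestly natural since it is built from the functoriality of $D$. Finally, the parenthetical equivalence in $(3)$ between ``sends connected colimits to limits'' and ``preserves wide pushouts'' closes for free: wide pushouts are themselves connected colimits, so one direction is immediate, and the converse follows by running $(3)\Rightarrow(2)\Rightarrow(3)$ through the stronger statement established above.
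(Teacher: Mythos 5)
Your proposal is correct and follows essentially the same route as the paper: $(2)\Rightarrow(3)$ via representables turning colimits into limits plus the commutation of connected limits with coproducts in $\smset$, and $(3)\Rightarrow(2)$ by writing $X$ as the wide pushout of its points glued along $\emptyset$ and reading off $D$ from the fibers of $D(1)\to D(0)$. The only cosmetic difference is that the paper packages the resulting wide pullback as a bundle hom-set $\bun(X!,D(0!))$ and cites its earlier equivalence $\dir\simeq\bun$ to get naturality, whereas you decompose it directly as $\coprod_i\fin(-,S_i)$ and verify naturality by hand.
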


We will also show that $\dir$ is equivalent to the arrow category of finite sets, 
\[\dir \simeq \fin^{\to},\]
and in particular that $\dir$ is an elementary topos.

If one allows \emph{arbitary} sums of functors represented by finite sets, one gets \emph{analytic} functors in the covariant case---first defined by Joyal in his seminal paper on combinatorial species \cite{joyal1981combinatorial}---and \emph{Dirichlet} functors in the contravariant case---first defined by Baez and Dolan and appearing in Baez's \emph{This Week's Finds} blog \cite{baez300}. Baez and Dolan also drop the traditional negative sign in the exponent (that is, they use $n^s$ where $n^{-s}$ usually appears), but also find a nice way to bring it back by moving to groupoids. Here, we drop the negative sign and work with finite sets to keep things as simple as possible. Similar considerations hold with little extra work for infinite Dirichlet series or power series, and even more generally, by replacing $\fin$ with $\smset$.


\chapter{Polynomial and Dirichlet functors}

Recall that a \emph{co-representable functor} $\fin\to\fin$ is one of the form $\fin(k, -)$ for a finite set
\[k=\{`1\text{'}, `2\text{'},\ldots,`k\text{'}\}.\]
We denote this functor by $\yon^k$ and say it is \emph{represented by} $k\in\fin$. Similarly, a \emph{(contra-) representable functor} $\fin\op\to\fin$ is contravariant functor of the form $\fin(-,k)$; we denote this functor by $k^\yon$. The functors $\yon^-$ and $-^\yon$ are the contravariant and covariant Yoneda embeddings,
\[
  \yon^k \coloneqq \fin(k,-)
  \qqand
  k^\yon\coloneqq\fin(-,k).
\]
For example $\yon^3(2)\cong8$ and $3^\yon(2)\cong9$.

Note that the functor $0^\yon\not\cong 0$ is not the initial object in $\dir$; it is given by
\[
0^\yon(s)=
\begin{cases}
1&\tn{ if } s=0\\
0&\tn{ if } s\geq1.
\end{cases}
\]
The coefficient $a_0$ of $1=\yon^0$ in a polynomial $P$ is called its \emph{constant} term. We refer to the coefficient $\zero{D}\coloneqq a_0$ of $0^\yon$ in a Dirichlet series $D$ as its \emph{zero-content} term. Rather than having no content, the content of the functor $\zero{D}\mdot0^\yon$ becomes significant exactly when it is applied to zero.

\begin{example}
The reader can determine which Dirichlet polynomial $D(\yon)\in\dir$ as in \cref{eqn.poly_dir_simple} has the following terms
\[
\begin{array}{c|ccccccc}
\yon&
\cdots&
5&
4&
3&
2&
1&
0
\\\hline
D(\yon)&
\cdots&
96&
48&
24&
12&
6&
7
\end{array}
\]
Hint: its zero-content term is $\zero{D}=4$.
\end{example}

The set $P(1)$ (resp.\ the set $D(0)$) has particular importance; it is the set of pure-power terms $\yon^k$ in $P$ (resp.\ the pure-exponential terms $k^\yon$ in $D$). For example if $P=\yon^2+4\yon+4$ and $D=2^\yon+4+4\mdot0^\yon$ then $P(1)=D(0)=9$. 

\begin{definition}\label{def.poly_dir_obs}
A \emph{polynomial functor} is a functor $P\colon\fin\to\fin$ that can be expressed as a sum of co-representable functors. Similarly, we define a \emph{Dirichlet functor} to be a functor $D\colon\fin\op\to\fin$ that can be expressed as a sum of representable presheaves (contravariant functors):
\begin{equation}\label{eqn.finite_sum}
  P=\sum_{i=1}^{P(1)}\yon^{p_i}
  \qqand
  D=\sum_{i=1}^{D(0)}(d_i)^\yon.
\end{equation}
That is, $P(X)=\sum_{i=1}^{P(1)}\fin(p_i,X)$ and $D(X)=\sum_{i=1}^{D(0)}\fin(X,d_i)$ as functors applied to $X\in\fin$.
\end{definition}

See \cref{thm.poly_equiv} above for well-known equivalent conditions in $\poly$ and \cref{thm.dir_equiv} below for a Dirichlet analogue.

\chapter{The categories $\poly$ and $\dir$}

For any small category $C$, let $\fin^C$ denote the category whose objects are the functors $C\to\fin$ and whose morphisms are the natural transformations between them.
\begin{definition}\label{def.poly_dir}
The \emph{category of polynomial functors}, denoted $\poly$, is the (skeleton of the) full subcategory of $\fin^\fin$ spanned by sums $P$ of representable functors. The \emph{category of Dirichlet functors}, denoted $\dir$, is the (skeleton of the) full subcategory of $\fin^{(\fin\op)}$ spanned by the sums $D$ of representable presheaves.
\end{definition}

While we will not pursue it here, one can take $\poly_\smset$ to be the full subcategory of functors $\smset\to\smset$ spanned by small coproducts of representables, and similarly for $\dir_\smset$.

\begin{lemma}\label{lemma.count_maps}
The set of polynomial maps $P\to Q$ and Dirichlet maps $D\to E$ are given by the following formulas:
\[
  \poly(P,Q)\coloneqq\prod_{i\in P(1)}Q(p_i)
  \qqand
  \dir(D,E)\coloneqq\prod_{i\in D(0)}E(d_i).
\]
\end{lemma}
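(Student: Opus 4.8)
The plan is to reduce both formulas to the Yoneda lemma combined with the universal property of coproducts, so the whole argument is essentially two applications of each fact. Consider first the polynomial case. By \cref{def.poly_dir} a map $P\to Q$ in $\poly$ is exactly a natural transformation in $\fin^\fin$, since $\poly$ is a \emph{full} subcategory; thus $\poly(P,Q)=\fin^\fin(P,Q)$. Writing $P=\sum_{i\in P(1)}\yon^{p_i}$ as in \cref{def.poly_dir_obs}, I would first note that this sum is a genuine coproduct in $\fin^\fin$, computed pointwise, and that it remains a coproduct in $\poly$ because a sum of representables again lies in $\poly$. The universal property of the coproduct then gives
\[
  \poly(P,Q)\cong\prod_{i\in P(1)}\poly(\yon^{p_i},Q).
\]
Next I would apply the Yoneda lemma to each factor: since $\yon^{p_i}=\fin(p_i,-)$ is represented by $p_i$, natural transformations $\yon^{p_i}\to Q$ correspond to elements of $Q(p_i)$, i.e.\ $\poly(\yon^{p_i},Q)\cong Q(p_i)$. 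Substituting yields $\poly(P,Q)\cong\prod_{i\in P(1)}Q(p_i)$.

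The Dirichlet case is formally dual and requires no new ideas. Writing $D=\sum_{i\in D(0)}(d_i)^\yon$, the same pointwise-coproduct argument in $\fin^{(\fin\op)}$ gives $\dir(D,E)\cong\prod_{i\in D(0)}\dir((d_i)^\yon,E)$, and the Yoneda lemma for presheaves identifies $\dir((d_i)^\yon,E)\cong E(d_i)$, since $(d_i)^\yon=\fin(-,d_i)$ is the representable presheaf on $d_i$. Combining these produces the stated formula $\dir(D,E)\cong\prod_{i\in D(0)}E(d_i)$.

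There is essentially no hard step: the content lies entirely in recognizing each object as a coproduct of (co)representables and invoking Yoneda. The only point deserving a moment's care is the indexing, namely that the decomposition is indexed precisely by $P(1)$ (resp.\ $D(0)$), which is what makes the product range over the correct set. This is immediate once one evaluates a single summand at the relevant object: $\yon^{p_i}(1)=\fin(p_i,1)\cong 1$ and $(d_i)^\yon(0)=\fin(0,d_i)\cong 1$, so each representable summand contributes exactly one element to $P(1)$ (resp.\ $D(0)$), matching the indexing set already fixed in \cref{def.poly_dir_obs}.
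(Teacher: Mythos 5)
Your proof is correct. The paper never actually proves this lemma where it is stated; its only justification comes later, in the remark that \cref{lemma.count_maps} is ``recovered'' from the closed structure of the Dirichlet monoidal product together with the adjunctions of \cref{eqn.adjoints_galore}, i.e.\ via $\poly(P,Q)\cong\poly(\yon\otimes P,Q)\cong\poly(\yon,[P,Q])\cong[P,Q](1)=\prod_{i\in P(1)}Q(p_i\cdot 1)=\prod_{i\in P(1)}Q(p_i)$ using \cref{eqn.poly_mon_closed}. Your route---fullness of $\poly\subseteq\fin^\fin$, the pointwise (hence genuine) coproduct decomposition $P=\sum_{i\in P(1)}\yon^{p_i}$, the universal property of coproducts, and the Yoneda lemma applied to each summand---is the more elementary and self-contained argument, and it is what the paper's later machinery secretly packages: the formula \cref{eqn.poly_mon_closed} for $[A,Q]$ is itself obtained by exactly this coproduct-plus-Yoneda manipulation. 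What the paper's approach buys is uniformity (the counting formula, the hom-tensor adjunction \cref{eqn.hom_tensor}, and the cartesian-closed formula all fall out of one construction); what yours buys is that the lemma is available immediately, without first setting up the Dirichlet product. Your care about the indexing, checking $\yon^{p_i}(1)\cong 1$ and $(d_i)^\yon(0)\cong 1$ so that the index set really is $P(1)$ (resp.\ $D(0)$), is exactly the right point to flag. The only detail worth a passing word is that you invoke Yoneda for $\fin$-valued rather than $\smset$-valued functors; since $\fin\hookrightarrow\smset$ is full this changes nothing, and the same silent identification is made throughout the paper.
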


\begin{example}
Let $P=2\yon^2$, $Q=\yon+1$, and let $D=2\cdot2^\yon$ and $E=1+0^\yon$. Then there are nine ($9$) polynomial morphisms $P\to Q$, zero ($0$) polynomial morphisms $Q\to P$, one ($1$) Dirichlet morphism $D\to E$, and eight ($8$) Dirichlet morphisms $E\to D$.
\end{example}

\begin{remark}\label{rem.products_coproducts}
Sums and products of polynomials in the usual algebraic sense agree exactly with sums and products in the categorical sense: if $P$ and $Q$ are polynomials, i.e.\ objects in $\poly$, then their coproduct is the usual algebraic sum $P+Q$ of polynomials, and similarly their product is the usual algebraic product $PQ$ of polynomials. The same is true for $\dir$: sums and products of Dirichlet polynomials in the usual algebraic sense agree exactly with sums and products in the categorical sense.
\end{remark}

\paragraph{Formal structures.}
We review some formal structures of the categories $\poly$ and $\dir$; all are straightforward to prove. There is an adjoint quadruple and adjoint 5-tuple as follows, labeled by where they send objects $n\in\fin$, $P\in\poly$, $D\in\dir$:
\begin{equation}\label{eqn.adjoints_galore}
\begin{tikzcd}[column sep=60pt]
  \fin
  	\ar[r, shift left=8pt, "n" description]
		\ar[r, shift left=-24pt, "n\yon"']&
  \poly
  	\ar[l, shift right=24pt, "P(0)"']
  	\ar[l, shift right=-8pt, "P(1)" description]
	\ar[l, phantom, "\scriptstyle\top"]
	\ar[l, phantom, shift left=16pt, "\scriptstyle\top"]
	\ar[l, phantom, shift right=16pt, "\scriptstyle\top"]
\end{tikzcd}
\hspace{1in}
\begin{tikzcd}[column sep=60pt]
  \fin
  	\ar[r, shift left=40pt, "n\cdot 0^\yon" description]
		\ar[r, shift left=8pt, "n" description]
		\ar[r, shift left=-24pt, "n^\yon"']&
  \dir
  	\ar[l, shift right=24pt, "D(0)" description]
		\ar[l, shift right=-8pt, "D(1)" description]
	\ar[l, phantom, shift right=16pt, "\scriptstyle\bot"]
	\ar[l, phantom, shift right=0pt, "\scriptstyle\bot"]
	\ar[l, phantom, shift right=-16pt, "\scriptstyle\bot"]
	\ar[l, phantom, shift right=32pt, "\scriptstyle\bot"]
\end{tikzcd}
\end{equation}
All five of the displayed functors out of $\fin$ are fully faithful.

For each $k:\fin$ the functors $P\mapsto P(k)$ and $D\mapsto D(k)$ have left adjoints, namely $n\mapsto n\yon^k$ and $n\mapsto n\mdot k^\yon$ respectively. These are functorial in $k$ and in fact extend to two-variable adjunctions $\fin\times\poly\to\poly$ and $\fin\times\dir\to\dir$. Indeed, for $n\in\fin$ and $P,Q\in\poly$ (respectively $D,E\in\dir$), we have
\begin{gather*}
\poly(nP,Q)\cong\poly(P,Q^n)\cong\fin(n,\poly(P,Q)),\\
\dir(nD,E)\cong\dir(D,E^n)\cong\fin(n,\dir(D,E)),
\end{gather*}
where $nP$ and $nD$ denote $n$-fold coproducts and $P^n$ and $D^n$ denote $n$-fold products.

Consider the unique function $0\to 1$. The natural transformation induced by it, denoted $\pi_D\colon D(1)\to D(0)$, is equivalent to two natural transformations on $\dir$ via the adjunctions in \cref{eqn.adjoints_galore}:
\begin{equation}\label{eqn.obtain_pi}
n\mdot0^\yon\to n,\qquad
D(1)\To{\pi_D} D(0),\qquad
n\to n^\yon.
\end{equation}
The one labeled $\pi_D$ is also $D(0!)$, where $0!\colon 0\to 1$ is the unique function of that type.

The composite of two polynomial functors $\fin\to\fin$ is again polynomial, $(P\circ Q)(n)\coloneqq P(Q(n))$; this gives a nonsymmetric monoidal structure on $\poly$. The monoidal unit is $\yon$.

Day convolution for the cartesian product monoidal structure provides symmetric monoidal structure $\otimes\colon\poly\times\poly\to\poly$, for which the monoidal unit is $\yon$. This monoidal structure---like the Cartesian monoidal structure---distributes over $+$ We can write an explicit formula for $P\otimes Q$, with $P,Q$ as in \cref{eqn.finite_sum}:
\begin{equation}\label{eqn.dir_monoidal_product}
  P\otimes
  Q=
  \sum_{i=1}^{P(1)}\sum_{j=1}^{Q(1)}\yon^{p_iq_j}
\end{equation}
We call this the \emph{Dirichlet product} of polynomials, for reasons we will see in \cref{rem.dir_prod_reason}.

The Dirichlet monoidal structure is closed; that is, for any $A,Q:\poly$ we define
\begin{equation}\label{eqn.poly_mon_closed}
[A,Q]\coloneqq\prod_{i:A(1)}Q\circ(a_i\yon),
\end{equation}
for example $[n\yon,\yon]\cong \yon^n$ and $[\yon^n,\yon]\cong n\yon$. For any polynomial $A$ there is an $(-\otimes A)\dashv[A,-]$ adjunction 
\begin{equation}\label{eqn.hom_tensor}
	\poly(P\otimes A,Q)\cong\poly(P,[A,Q]).
\end{equation}
In particular we recover \cref{lemma.count_maps} using  \cref{eqn.poly_mon_closed,eqn.adjoints_galore}. The cartesian monoidal structure on $\poly$ is also closed, $\poly(P\times A,Q)\cong\poly(P,Q^A)$, and the formula for $Q^A$ is similar to \cref{eqn.poly_mon_closed}:
\[
  Q^A\coloneqq\prod_{i:A(1)}Q\circ (a_i+\yon).
\]


If we define the \emph{global sections} functor $\Gamma\colon \poly\to\fin\op$ by $\Gamma P\coloneqq\poly(P,\yon)$, or explicitly $\Gamma(P)=[P,\yon](1)=\prod_{i}p_i$, we find that it is left adjoint to the Yoneda embedding
\[
	\adjr[50pt]{\fin\op}{n\mapsto\yon^n}{\Gamma P\mapsfrom P}{\poly}.
\]

Each of the categories $\poly$ and $\dir$ has pullbacks, which we denote using ``fiber product notation'' $A\times_CB$. We can use pullbacks in combination with monad units $\eta_P\colon P\to P(1)$ and $\eta_D\colon D\to D(0)$ arising from \cref{eqn.adjoints_galore} to recover \cref{eqn.finite_sum}:
\[
  P=\sum_{i=1}^{P(1)}P\times_{P(1)}`i\text{'}
  \qqand
	D=\sum_{i=1}^{D(0)}D\times_{D(0)}`i\text{'}.
\]

\begin{remark}
 By a result of Rosebrugh and Wood \cite{rosebrugh1994adjoint}, the category of finite sets is characterized amongst locally finite categories by the existence of the five left adjoints to its Yoneda embedding $k \mapsto y^k\colon \fin \to \fin^{\fin\op}$. The adjoint sextuple displayed in \eqref{eqn.adjoints_galore} is just the observation that five of these six functors restrict to the subcategory $\dir$.
\end{remark}

\chapter{$\poly$ and $\dir$ in terms of bundles}\label{chap.poly_dir_bund}
There is a bijection between the respective object-sets of these two categories
\begin{align}
\nonumber
	\ob(\poly)&\To{\cong}\ob(\dir)\\\label{eqn.poly_dir}
	\sum_{i=1}^n\yon^{k_i}&\mapsto\sum_{i=1}^n (k_i)^\yon.
\end{align}
We call this mapping the \emph{Dirichlet transform} and denote it using an over-line $P\mapsto \overline{P}$. We will see in \cref{thm.equivs} that this bijection extends to an equivalence $\poly_{\tn{cart}}\cong\dir_{\tn{cart}}$ between the subcategories of cartesian maps.

\begin{remark}\label{rem.dir_prod_reason}
With the Dirichlet transform in hand, we see why $P\otimes Q$ can be called the Dirichlet product, e.g.\ in \cref{eqn.dir_monoidal_product}. Namely, the Dirichlet transform is strong monoidal with respect to $\otimes$ and the cartesian monoidal structure $\times$ in $\dir$:
\[\overline{P\otimes Q}=\overline{P}\times\overline{Q}.\]

\end{remark}

\begin{proposition}\label{prop.poly_function}
There is a one-to-one correspondence between the set of polynomials in one variable, the set of Dirichlet polynomials, and the set of (isomorphism classes of) functions $\pi\colon s\to t$ between finite sets.
\end{proposition}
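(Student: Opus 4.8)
The plan is to route everything through the bijection $\ob(\poly)\xrightarrow{\cong}\ob(\dir)$ furnished by the Dirichlet transform in \cref{eqn.poly_dir}; granting this, it suffices to biject one of the two with isomorphism classes of functions, and I would work on the Dirichlet side, where a bundle is already available intrinsically. Namely, to a Dirichlet functor $D$ I would assign the function $\pi_D\colon D(1)\to D(0)$ from \cref{eqn.obtain_pi}, the map $D(0!)$ induced by the unique $0!\colon 0\to 1$. In the reverse direction I would send a function $\pi\colon s\to t$ to the Dirichlet polynomial $\sum_{j\in t}\bigl(\lvert\pi^{-1}(j)\rvert\bigr)^{\yon}$, reading $\pi$ as the $t$-indexed family of its fibres and summing the corresponding representables.

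Next I would verify that these constructions are mutually inverse at the level of presentations. Writing $D=\sum_{i=1}^{D(0)}(d_i)^{\yon}$, the formula $D(X)=\sum_i\fin(X,d_i)$ gives that $D(0)=\sum_i\fin(0,d_i)$ has exactly one point per summand and that $D(1)=\sum_i\fin(1,d_i)\cong\sum_i d_i$; moreover $\pi_D=D(0!)$ restricts on the $i$-th summand to the unique map $\fin(1,d_i)\to\fin(0,d_i)$, so the fibre of $\pi_D$ over the point coming from summand $i$ has precisely $d_i$ elements. Hence the multiset of fibre cardinalities of $\pi_D$ is $\{d_1,\dots,d_{D(0)}\}$, and rebuilding a Dirichlet polynomial from $\pi_D$ returns $\sum_i(d_i)^{\yon}=D$; conversely the bundle built from $\sum_{j\in t}(\lvert\pi^{-1}(j)\rvert)^{\yon}$ has base $t$ and fibres of the prescribed sizes, hence is isomorphic to $\pi$.

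The step I expect to demand the most care is not these computations but the bookkeeping of equivalence relations, which is what turns a matched pair of presentations into a genuine bijection. On the function side the relevant notion is isomorphism of maps $\pi\cong\pi'$, which may independently relabel the base and permute elements within each fibre, so the complete invariant is the finite multiset of fibre cardinalities; the assignment $D\mapsto\pi_D$ is functorial and so descends to isomorphism classes. On the Dirichlet side, since the coproduct is symmetric the object $\sum_i(d_i)^{\yon}$ depends only on the multiset $\{d_i\}$ of exponents, and by the Yoneda lemma together with the indecomposability of representables (uniqueness of the decomposition in \cref{eqn.finite_sum}) two such sums are equal in the skeleton $\dir$ exactly when these multisets agree. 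Both sides therefore reduce to the set of finite multisets of natural numbers, on which the maps above are the identity; composing with the Dirichlet transform of \cref{eqn.poly_dir} yields the full three-way correspondence, under which $\sum_i\yon^{k_i}$, its transform $\sum_i(k_i)^{\yon}$, and any $\pi\colon s\to t$ whose fibres have cardinality multiset $\{k_i\}$ all encode the same datum.
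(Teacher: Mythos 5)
Your proposal is correct and follows essentially the same route as the paper: the Dirichlet transform of \cref{eqn.poly_dir} handles the $\poly\leftrightarrow\dir$ leg, and the mutually inverse assignments $D\mapsto\pi_D=D(0!)$ and $\pi\mapsto\sum_{j\in t}(\lvert\pi^{-1}(j)\rvert)^\yon$ handle the $\dir\leftrightarrow$ functions leg. The paper leaves the roundtrip verification as ``easy to see''; you have simply filled in those computations and the multiset-of-fibre-cardinalities bookkeeping explicitly.
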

\begin{proof}
We already established a bijection $P\mapsto\overline{P}$ between polynomials and finite Dirichlet series in \cref{eqn.poly_dir}.

Given a finite Dirichlet series $D$, we have a function $\pi_D\colon D(1)\to D(0)$ as in \cref{eqn.obtain_pi}. And given a function $\pi\colon s\to t$, define $D_\pi\coloneqq\sum_{i=1}^t(d_i)^\yon$, where $d_i\coloneqq\pi\inv(i)$ for each $1\leq i\leq t$. (N.B.\ Rather than constructing $D_\pi$ from $\pi$ by hand, one could instead use a certain orthogonal factorization system on $\dir$.)

It is easy to see that the roundtrip on Dirichlet series is identity, and that the round-trip for functions is a natural isomorphism.
\end{proof}

We will upgrade \cref{prop.poly_function} to an equivalence $\poly_{\tn{cart}} \simeq \dir_{\tn{cart}}$ between certain subcategories of $\poly$ and $\dir$ in \cref{thm.equivs}.

\begin{example}
Under the identification from \cref{prop.poly_function}, both the polynomial $2\yon^3+\yon^2+3$ and the Dirichlet series $2\mdot3^\yon+1\mdot2^\yon+3\mdot 0^\yon$ correspond to the function
\begin{equation}\label{eqn.bundle}
\begin{tikzpicture}[x=.5cm, y=.35cm, every label/.style={font=\scriptsize}, baseline=(f)]
	\node[label={[above=-5pt]:$1$}] (Ya) {$\bullet$};
	\node[right=1 of Ya,  label={[above=-5pt]:$2$}] (Yc) {$\bullet$};
	\node[right=1 of Yc,  label={[above=-5pt]:$3$}] (Yd) {$\bullet$};
	\node[right=1 of Yd,  label={[above=-5pt]:$4$}] (Ye) {$\bullet$};
	\node[right=1 of Ye,  label={[above=-5pt]:$5$}] (Yf) {$\bullet$};
	\node[right=1 of Yf,  label={[above=-5pt]:$6$}] (Yg) {$\bullet$};
	\node[draw, inner ysep=4pt, fit={($(Ya)+(-1em,3ex)$) (Yg)}] (Y) {};
	\node[left=0 of Y] (Ylab) {$6\cong D(0)\cong$};
  \node[above=4 of Ya, label={[above=-5pt]:$(1,1)$}] (X11) {$\bullet$};
  \node[above=1 of X11, label={[above=-5pt]:$(1,2)$}] (X12) {$\bullet$};
  \node[above=1 of X12, label={[above=-5pt]:$(1,3)$}] (X13) {$\bullet$};
  \node[above=4 of Yc, label={[above=-5pt]:$(2,1)$}] (X31) {$\bullet$};
  \node[above=1 of X31, label={[above=-5pt]:$(2,2)$}] (X32) {$\bullet$};
  \node[above=1 of X32, label={[above=-5pt]:$(2,3)$}] (X33) {$\bullet$};
  \node[above=4 of Yd, label={[above=-5pt]:$(3,1)$}] (X41) {$\bullet$};
  \node[above=1 of X41, label={[above=-5pt]:$(3,2)$}] (X42) {$\bullet$};
  \node [above=4 of Yg] (xend) {};
	\node[draw, inner ysep=3pt, fit={($(X13)+(-1em,3ex)$) ($(xend)+(.4,0)$)}] (X) {};
	\node[left=0 of X] {$8\cong D(1)\cong$};
	\draw[->, shorten <=3pt, shorten >=3pt] (X) to node[left] (f) {$\pi$} (Y);
\end{tikzpicture}
\end{equation}
\end{example}

We can think of a function $\pi\colon s\to t$, e.g.\ that shown in \eqref{eqn.bundle}, as a \emph{bundle} of fibers $\pi\inv(`i\text{'})$, one for each element $`i\text{'}\in t$. In \cref{def.sheaves_bundles} we define two different notions of morphism between bundles. We will see in \cref{thm.equivs} that they correspond to morphisms in the categories $\poly$ and $\dir$.

For any function $\pi'\colon s'\to t'$ and function $f\colon t\to t'$, denote by $f^*(\pi')$ the pullback function as shown
\[
\begin{tikzcd}
	s\times_{t'}s'\ar[r]\ar[d, "f^*(\pi')"']&
	s'\ar[d, "\pi'"]\\
	t\ar[r, "f"']&
	t'\ar[ul, phantom, very near end, "\lrcorner"]
\end{tikzcd}
\]

\begin{definition}\label{def.sheaves_bundles}
Let $\pi\colon s\to t$ and $\pi'\colon s'\to t'$ be functions between finite sets.
\begin{itemize}
	\item a \emph{bundle morphism} consists of a pair $(f,f_\sharp)$ where $f\colon t\to t'$ is a function and $f_\sharp\colon \pi\to f^*(\pi')$ is a morphism in the slice category over $t$;
	\item a \emph{container morphism} consists of a pair $(f,f^\sharp)$ where $f\colon t\to t'$ is a function and $f^\sharp\colon f^*(\pi')\to \pi$ is a morphism in the slice category over $t$.
\end{itemize}

We say a bundle morphism $(f, f_\sharp)$ (resp. a container morphism $(f, f^{\sharp})$) is \emph{cartesian} if $f_{\sharp}$ (resp. $f^{\sharp})$ is an isomorphism.

\begin{figure}
\[
  \begin{tikzcd}
  s\ar[dr, bend right=40pt, "\pi"']&[-5pt]
  t\times_{t'}s'\ar[r]\ar[d, "f^*\pi'"']\ar[from=l, "f_\sharp"]&
  s'\ar[d, "\pi'"]\\&
  t\ar[r, "f"']&
  t'\ar[ul, phantom, very near end, "\lrcorner"]
  \end{tikzcd}
  \hspace{.75in}
  \begin{tikzcd}
  s\ar[dr, bend right=40pt, "\pi"']&[-5pt]
  t\times_{t'}s'\ar[r]\ar[d, "f^*\pi'"']\ar[l, "f^\sharp"']&
  s'\ar[d, "\pi'"]\\&
  t\ar[r, "f"']&
  t'\ar[ul, phantom, very near end, "\lrcorner"]
  \end{tikzcd}
\]
\caption{The categories $\bun$ and $\cont$ have the same objects, functions $\pi\colon s\to t$. Here a morphism $(f,f_\sharp)\colon \pi\to \pi'$ in $\bun$ and a morphism $(f,f^\sharp)\colon \pi\to\pi'$ in $\cont$ are shown.
}
\label{fig.bund_cont_maps}
\end{figure}
Define $\bun$ (resp.\ $\cont$) to be the category for which an object is a function between finite sets and a morphism is a bundle morphism (resp.\ container morphism); see \cref{fig.bund_cont_maps}. Denote by $\bun_{\tn{cart}}$ (resp.\ $\cont_{\tn{cart}}$) the subcategory of cartesian bundle morphisms.
\end{definition}

One may note that $\bun$ is the Grothendieck construction of the self-indexing $\fin_{/(-)} \colon \fin\op \to \smcat$, while $\cont$ is the Grothendieck construction of its point-wise opposite $(\fin_{/(-)})\op \colon \fin\op \to \smcat$.

 The name \emph{container} comes from the work of Abbot, Altenkirch, and Ghani \cites{abbott2003categories}{abbott2005containers}{abbot2003categoriesthesis} (see Remark 2.18 in \cite{kock2012polynomial} for a discussion of the precise relationship between the notion of container and the notion of polynomial and polynomial functor).

\begin{remark}\label{rem.dir_fin2}
By the universal property of pullbacks, $\bun\simeq\fin^{\to}$ is equivalent (in fact isomorphic) to the category of morphisms and commuting squares in $\fin$. Furthermore, $\bun_{\tn{cart}}$ is equivalent to the category of morphisms and pullback squares in $\fin$, and $\bun_{\tn{cart}} \simeq \cont_{\tn{cart}}$ (as in both cases a cartesian morphism $(f, f_{\sharp})$ or $(f, f^{\sharp})$ is determined by $f$ alone).
\end{remark}

%
%
 Next we show that $\bun\simeq\dir$ is also equivalent to the category of Dirichlet functors, from \cref{def.poly_dir}. Recall that a natural transformation is called \emph{cartesian} if its naturality squares are pullbacks.

\begin{theorem}\label{thm.equivs}
We have equivalences of categories
\[
\poly\simeq\cont
\qqand
\dir\simeq\bun.
\]
In particular, this gives an equivalence $\poly_{\tn{cart}} \simeq \dir_{\tn{cart}}$ between the category of polynomial functors and cartesian natural transformations and the category of Dirichlet functors and cartesian natural transformations.
\end{theorem}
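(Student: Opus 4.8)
The plan is to build, for each of the two equivalences, a functor realizing the object-level bijection of \cref{prop.poly_function} and then check that it is fully faithful and essentially surjective. For $\dir\simeq\bun$ I would first invoke \cref{rem.dir_fin2}, which identifies $\bun$ with the arrow category $\fin^{\to}$, so that it suffices to produce an equivalence $\dir\simeq\fin^{\to}$. Define $\Phi\colon\dir\to\fin^{\to}$ on objects by $\Phi(D)\coloneqq\bigl(\pi_D\colon D(1)\to D(0)\bigr)$, where $\pi_D=D(0!)$ as in \cref{eqn.obtain_pi}, and on a natural transformation $\alpha\colon D\to E$ by $\Phi(\alpha)\coloneqq(\alpha_1,\alpha_0)$; naturality of $\alpha$ against $0!\colon 0\to 1$ is exactly the assertion that $(\alpha_1,\alpha_0)$ is a commuting square, so $\Phi$ is well defined and visibly functorial. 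Essential surjectivity is immediate from \cref{prop.poly_function}, since every $\pi\colon s\to t$ is isomorphic to some $\pi_D$.

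To see $\Phi$ is fully faithful I would compare the two hom-sets directly. Writing $D=\sum_i(d_i)^\yon$ and $E=\sum_j(e_j)^\yon$, \cref{lemma.count_maps} gives $\dir(D,E)=\prod_{i\in D(0)}E(d_i)=\prod_i\sum_j\fin(d_i,e_j)$, so a morphism is a function $f\colon D(0)\to E(0)$ together with, for each term $i$, a map $\phi_i\colon d_i\to e_{f(i)}$. On the other side, a morphism $\pi_D\to\pi_E$ in $\fin^{\to}$ is a commuting square $(g_1,g_0)$, and commutativity says precisely that $g_1$ carries the fibre $d_i=\pi_D^{-1}(i)$ into $e_{g_0(i)}=\pi_E^{-1}(g_0(i))$. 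The assignment $(f,(\phi_i)_i)\mapsto(g_1,g_0)$ with $g_0=f$ and $g_1|_{d_i}=\phi_i$ is a bijection, and a short Yoneda computation shows it is exactly the hom-map of $\Phi$ (evaluating $\alpha$ at $0$ recovers $f$, at $1$ recovers the $\phi_i$). Hence $\Phi$ is fully faithful, and composing with \cref{rem.dir_fin2} yields $\dir\simeq\bun$.

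The equivalence $\poly\simeq\cont$ is handled symmetrically, except that here it is cleaner to build the functor in the direction $\Psi\colon\cont\to\poly$: send a bundle $\pi\colon s\to t$ to $P_\pi\coloneqq\sum_{b\in t}\yon^{\pi^{-1}(b)}$, and send a container morphism $(f,f^\sharp)$ to the natural transformation whose $b$-th summand is precomposition with $f^\sharp_b\colon(\pi')^{-1}(f(b))\to\pi^{-1}(b)$, i.e.\ the map $\fin(\pi^{-1}(b),-)\to\fin((\pi')^{-1}(f(b)),-)$ landing in the $f(b)$-th summand of $P_{\pi'}$. Essential surjectivity is \cref{def.poly_dir_obs}. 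For full faithfulness, \cref{lemma.count_maps} gives $\poly(P_\pi,P_{\pi'})=\prod_b P_{\pi'}(\pi^{-1}(b))=\prod_b\sum_{b'}\fin((\pi')^{-1}(b'),\pi^{-1}(b))$, which is exactly the data $(f,(f^\sharp_b)_b)$ of a container morphism, the contravariance in the fibres matching the contravariance of the exponent. The one genuinely fiddly point---and the step I expect to absorb most of the work---is checking that these hom-bijections respect composition and identities. This amounts to unwinding that composition of container (resp.\ bundle) morphisms, defined via pullback and the slice categories of \cref{def.sheaves_bundles}, reverses the order on the $\sharp$-components, and that this matches vertical composition of the corresponding natural transformations; it is routine but requires care with the pullback bookkeeping.

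Finally, for the ``in particular'' clause I would show each equivalence carries cartesian natural transformations to cartesian morphisms of bundles/containers. The key observation is that, with $\alpha\colon D\to E$ described by $(f,(\phi_i)_i)$ as above, $\alpha$ is cartesian iff every $\phi_i$ is a bijection: the naturality squares split as a coproduct indexed by the terms of $D$, so testing against $0!\colon0\to1$ forces each $\phi_i$ to be invertible, and conversely if each $\phi_i$ is invertible then $\alpha$ restricts to an isomorphism on each representable summand and every summand-square is trivially a pullback. Under $\Phi$ and $\Psi$ this is exactly the condition that $f_\sharp$ (resp.\ $f^\sharp$) be an isomorphism, i.e.\ that the morphism be cartesian. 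Thus $\dir_{\tn{cart}}\simeq\bun_{\tn{cart}}$ and $\poly_{\tn{cart}}\simeq\cont_{\tn{cart}}$, and chaining these with the identification $\bun_{\tn{cart}}\simeq\cont_{\tn{cart}}$ from \cref{rem.dir_fin2} gives the desired $\poly_{\tn{cart}}\simeq\dir_{\tn{cart}}$.
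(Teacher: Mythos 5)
Your proposal is correct and follows essentially the same route as the paper: both build the functors from the object-level correspondence of \cref{prop.poly_function}, establish full faithfulness by a Yoneda computation of hom-sets out of (coproducts of) representables, and reduce the cartesian claim to testing naturality against $0!\colon 0\to 1$. The only cosmetic differences are that you compare hom-sets directly via \cref{lemma.count_maps} where the paper first treats representables and then extends along coproducts, and your termwise ``each $\phi_i$ is a bijection'' criterion replaces the paper's universal-property-of-pullbacks argument for the cartesian case.
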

\begin{proof}
The functors $P_-\colon\cont\to\poly$ and $D_-\colon\bun\to\dir$ are defined on each object, i.e.\ function $\pi\colon s\to t$, by the formula $\pi\mapsto P_\pi$ and $\pi\mapsto D_\pi\coloneqq\overline{P_\pi}$ as in \cref{prop.poly_function}. For each $1\leq i\leq t$, denote the fiber of $\pi$ over $i$ by $k_i\coloneqq\pi\inv(i)$.

For any finite set $X$, consider the unique map $X!\colon X\to 1$. Applying $P_-$ and $D_-$ to it, we obtain the corresponding representable: $P_{X!}\cong\yon^X$ and $D_{X!}\cong X^\yon$. We next check that there are natural isomorphisms
 \begin{gather}\nonumber
  \poly(P_{X!},P_\pi)\cong 
  P_\pi(X)=
  \sum_{}X^{k_i}\cong
  \cont(X!, \pi),
  \\\label{eqn.dir_bund}
  \dir(D_{X!}, D_\pi)\cong 
  D_\pi(X)=
  \sum_{i=1}^{t}(k_i)^X\cong
  \bun(X!, \pi).
\end{gather}
In both lines, the first isomorphism is the Yoneda lemma and the second is a computation using \cref{def.sheaves_bundles} (see \cref{fig.bund_cont_maps}). Thus we define $P_-$ on morphisms by sending $f\colon\pi\to\pi'$ in $\cont$ to the ``compose-with-$f$'' natural transformation, i.e.\ having $X$-component $\cont(X!,f)\colon\cont(X!,\pi)\to\cont(X!,\pi')$, which is clearly natural in $X$. We define $D_-$ on morphisms similarly: for $f$ in $\bun$, use the natural transformation $\bun(-!,f)$.

By definition, every object in $\poly$ and $\dir$ is a coproduct of representables, so to prove that we have the desired equivalences, one first checks that coproducts in $\cont$ and $\bun$ are taken pointwise:
\[
(\pi\colon s\to t)+(\pi'\colon s'\to t')\cong(\pi+\pi')\colon (s+s')\to (t+t'),
\]
and then that $P_{\pi+\pi'}=P_\pi+P_{\pi'}$ and $D_{\pi+\pi'}=D_\pi+D_{\pi'}$; see \cref{rem.products_coproducts}.

By \cref{rem.dir_fin2}, we know that $\bun_{\tn{cart}} \simeq \cont_{\tn{cart}}$, and we have just established the equivalences $\poly \simeq \cont$ and $\dir \simeq \bun$. It thus remains to check that the latter equivalences identify cartesian natural transformations in $\poly$ with cartesian morphisms in $\cont$, and similarly for $\dir$ and $\bun$. For polynomial functors, we may refer to \cite[Section 2]{kock2012polynomial}.

Turning to Dirichlet functors, we want to show that for any $f\colon D\to D'$ the square
\begin{equation}\label{eqn.cart1}
\begin{tikzcd}
	D(1)\ar[r, "f_1"]\ar[d, "\pi"']&
	D'(1)\ar[d, "\pi'"]\\
	D(0)\ar[r, "f_0"']&
	D'(0)
\end{tikzcd}
\end{equation}
is a pullback in $\smset$ iff for all functions $g\colon X\to X'$, the naturality square
\begin{equation}\label{eqn.cart2}
\begin{tikzcd}
  D(X')\ar[r, "f_{X'}"]\ar[d, "D(g)"']&
  D'(X')\ar[d, "D'(g)"]\\
  D(X)\ar[r, "f_X"']&
  D'(X)
\end{tikzcd}
\end{equation}
is a pullback in $\smset$; we will freely use the natural isomorphism $D_\pi(X)\cong\bun(X!,\pi)$ from \cref{eqn.dir_bund}. The square in \cref{eqn.cart1} is a special case of that in \cref{eqn.cart2}, namely for $g\coloneqq 0!$ the unique function $0\to 1$; this establishes the only-if direction.

To complete the proof, suppose that \cref{eqn.cart1} is a pullback, take an arbitrary $g\colon X\to X'$, and suppose given a commutative solid-arrow diagram as shown:
\[
\begin{tikzcd}[sep=small]
  X\ar[rr, "g"]\ar[dd]\ar[rd]&&
  X'\ar[dr]\ar[dd]\ar[dl, dotted]\\&
  D(1)\ar[rr, crossing over]&&
  D'(1)\ar[dd]\\
  1\ar[dr]\ar[rr, equal]&&
  1\ar[dr]\ar[dl, dotted]\\&
  D(0)\ar[from=uu, crossing over]\ar[rr]&&
  D'(0)
\end{tikzcd}
\]
We can interpret the statement that \cref{eqn.cart2} is a pullback as saying that there are unique dotted arrows making the diagram commute, since $DX \cong \bun(X!,  D0!)$ and similarly for the other corners of the square in \cref{eqn.cart2}. So, we need to show that if the front face is a pullback, then there are unique diagonal dotted arrows as shown, making the diagram commute. This follows quickly from the universal property of the pullback.
\end{proof}

\begin{corollary}\label{cor.dir_topos}
$\dir$ is an elementary topos.
\end{corollary}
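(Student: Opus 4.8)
The plan is to leverage the equivalence $\dir\simeq\bun$ from \cref{thm.equivs} together with the identification $\bun\simeq\fin^{\to}$ recorded in \cref{rem.dir_fin2}. Writing $\mathbf{2}$ for the ``walking arrow'' category (two objects $0,1$ and a single nonidentity morphism $0\to 1$), the arrow category $\fin^{\to}$ is precisely the functor category $[\mathbf{2},\fin]$, so $\dir\simeq[\mathbf{2},\fin]$. Since the property of being an elementary topos is invariant under equivalence of categories (all of finite limits, cartesian closure, and the subobject classifier are preserved and reflected by an equivalence), it suffices to show that $[\mathbf{2},\fin]$ is an elementary topos. The starting point is that $\fin$ is itself an elementary topos, with subobject classifier $2=\{0,1\}$ and exponentials given by finite function-sets.

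Next I would invoke the standard theory of functor categories, taking care to remain inside $\fin$ rather than $\smset$. For any small category $C$ the functor category $[C,\smset]$ is a (Grothendieck) topos, and the key point in our setting is that $\mathbf{2}$ is a \emph{finite} category, so all of the relevant constructions stay within finite sets. Concretely: finite limits and finite colimits in $[\mathbf{2},\fin]$ are computed objectwise and hence exist because $\fin$ has them; the exponential $Q^{P}$ is given objectwise by the hom-formula $Q^{P}(i)\iso[\mathbf{2},\fin]\big(\mathbf{2}(i,-)\times P,\, Q\big)$, which is a finite set because it is carved out of a finite product, indexed by the finitely many morphisms of $\mathbf{2}$, of finite sets $\fin(P(j),Q(j))$; and the subobject classifier $\Omega$ is given objectwise by the set of subfunctors of the representable $\mathbf{2}(i,-)$, i.e.\ the cosieves on $i$, of which there are only finitely many since $\mathbf{2}$ is finite. (Unwinding this for $\mathbf{2}$ gives $\Omega(0)=3$ and $\Omega(1)=2$, with $\true\colon 1\to\Omega$ picking out the maximal cosieve at each object.)

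The only genuine content beyond citing presheaf-topos theory is this finiteness bookkeeping: one must check that $\Omega$ and each exponential $Q^{P}$, a priori objects of the Grothendieck topos $[\mathbf{2},\smset]$, actually land in $\fin$, and that the structural maps (evaluation, and $\true$) restrict accordingly. Because $\mathbf{2}$ has finitely many morphisms and each $P(i),Q(i)$ is finite, every set appearing in these formulas is finite, so the restriction is automatic and $[\mathbf{2},\fin]$ inherits the full elementary-topos structure from $[\mathbf{2},\smset]$. I expect the main---though still essentially routine---step to be verifying the subobject classifier explicitly, namely that a subobject of $F\in[\mathbf{2},\fin]$ is classified by the map sending each element of $F(i)$ to the cosieve of morphisms out of $i$ along which it is carried into the subobject; there is no real obstacle here, only diagram-checking. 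Alternatively, one may short-circuit the whole argument by citing the general fact that $\mathcal{E}^{C}$ is an elementary topos whenever $\mathcal{E}$ is an elementary topos and $C$ is a finite category, applied to $\mathcal{E}=\fin$ and $C=\mathbf{2}$.
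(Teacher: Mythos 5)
Your proof follows exactly the paper's route: the paper's own argument is simply that $\dir\simeq\fin^{\to}$ by \cref{thm.equivs} and \cref{rem.dir_fin2}, and that $\fin^{C}$ is an elementary topos for any finite category $C$. The extra finiteness bookkeeping you supply (including the correct computation $\Omega(0)=3$, $\Omega(1)=2$) is a sound elaboration of the general fact the paper cites without proof.
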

\begin{proof}
For any finite category $C$, the functor category $\fin^C$ is an elementary topos. The result now follows from \cref{rem.dir_fin2,thm.equivs}, noting that $\dir \simeq \fin^{\to}$.
\end{proof}

As we mentioned in the introduction, this all goes through smoothly when one drops all finiteness conditions. The general topos of Dirichlet functors is the category of (arbitrary) sums of representables $\smset\op \to \smset$, and this is equivalent to the arrow category $\smset^{\to}$ and so is itself a topos.

We conclude with the equivalence promised in \cref{chap.intro}.

\begin{theorem}\label{thm.dir_equiv}
A functor $D\colon \fin\op \to \fin$ is a Dirichlet polynomial if and only if it preserves connected limits, or equivalently wide pullbacks.
\end{theorem}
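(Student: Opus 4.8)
The plan is to prove the biconditional as a cycle of three implications among the conditions (1) $D$ is a Dirichlet polynomial (equivalently, a sum of representables, by \cref{def.poly_dir_obs}), (2) $D$ preserves connected limits, and (3) $D$ preserves wide pullbacks. Here ``$D$ preserves connected limits'' means that $D$ sends connected colimit diagrams in $\fin$ to the corresponding limit diagrams in $\fin$, since a connected limit in $\fin\op$ is precisely a connected colimit in $\fin$. The implication (2)$\Rightarrow$(3) is immediate, since wide pullbacks are connected limits.

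For (1)$\Rightarrow$(2), first note that a single representable $k^\yon=\fin(-,k)$ carries every colimit in $\fin$ to the corresponding limit, by the universal property of colimits. A Dirichlet polynomial is a finite coproduct $D=\sum_i(d_i)^\yon$, so it remains to invoke the fact that coproducts commute with connected limits in $\fin$: for a connected diagram $X\colon J\to\fin$ we get $D(\colim_J X)=\sum_i\lim_J(d_i)^{X}=\lim_J\sum_i(d_i)^{X}=\lim_J D(X)$. The only nonformal input is this commutation, which is the standard extensivity property of $\smset$ restricted to $\fin$: in a compatible family indexed by a connected diagram, all components are forced into a single coproduct summand.

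The substance is (3)$\Rightarrow$(1). Set $t\coloneqq D(0)$, $s\coloneqq D(1)$ and $\pi\coloneqq\pi_D=D(0!)\colon s\to t$, and form the Dirichlet polynomial $D_\pi$ of \cref{prop.poly_function}. The key geometric observation is that every nonempty finite set $X$ is a wide pushout in $\fin$: the diagram with center $0$ and one leg $0\to 1$ for each $x\in X$ has colimit $X$, with cocone the element-inclusions $x\colon 1\to X$. I would then define a comparison map $\phi_X\colon D(X)\to D_\pi(X)$ whose $x$-th component is $D(x)\colon D(X)\to D(1)=s$; these land in the fiber power $s\times_t\cdots\times_t s\cong D_\pi(X)$ because $\pi\circ D(x)=D(0!)\circ D(x)=D(x\circ 0!)$ is the image of the unique map $0\to X$ and hence is independent of $x$. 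Because $D$ preserves the wide pushout above, $\phi_X$ is exactly the canonical map exhibiting $D(X)$ as this wide pullback, so it is a bijection (for $X=0$ one checks $\phi_0=\id_{D(0)}$ directly). A short computation from functoriality shows $\phi$ is natural in $X$, giving an isomorphism $D\cong D_\pi$, a sum of representables.

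The main obstacle is precisely the identification, in (3)$\Rightarrow$(1), of the explicitly-defined $\phi_X$ with the canonical wide-pullback comparison map: this is what upgrades the pointwise bijections into a natural isomorphism $D\cong D_\pi$ and thereby certifies that an arbitrary wide-pullback-preserving functor is genuinely a Dirichlet polynomial. By contrast (1)$\Rightarrow$(2) is routine once the coproduct/connected-limit commutation is granted, and the cyclic arrangement $(1)\Rightarrow(2)\Rightarrow(3)\Rightarrow(1)$ delivers the equivalence of ``connected limits'' and ``wide pullbacks'' for free.
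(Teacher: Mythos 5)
Your proposal is correct and follows essentially the same route as the paper: representables plus commutation of coproducts with connected limits for the forward direction, and for the converse the presentation of each finite set as the wide pushout of its points over $0$, identifying $D(X)$ with the fiber power of $D(1)$ over $D(0)$, i.e.\ with $D_\pi(X)$ for $\pi=D(0!)$. The only cosmetic difference is that you assemble the natural isomorphism $D\cong D_\pi$ by hand, where the paper packages the same computation as $D(X)\cong\bun(X!,D(0!))$ and invokes \cref{thm.equivs}.
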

\begin{proof}
Let $D(\yon)=\sum_{i:D(0)}(d_i)^\yon$, and suppose that $J$ is any connected category. Then for any diagram $X\colon J\to\fin$, we have
\begin{align*}
    D(\colim X_j) &= \sum_{i:D(0)} (d_i)^{\colim X_j} \\
    &\cong \sum_{i:D(0)} \lim (d_i)^{X_j} \\
    &\cong \lim \sum_{i:D(0)} (d_i)^{X_j} \\
    &= \lim D(X_j)
\end{align*}
since connected limits commute with sums in any topos (in particular $\smset$).

Now suppose $D\colon\fin\op\to\fin$ is any functor that preserves connected limits; in particular, it sends wide pushouts to wide pullbacks. Every finite set $X$ can be expressed as the wide pushout
\[
\begin{tikzcd}
              &              & X                                               &              &               \\
1 \arrow[rru] & 1 \arrow[ru] & \cdots                                          & 1 \arrow[lu] & 1 \arrow[llu] \\
              &              & 0 \arrow[llu] \arrow[lu] \arrow[ru] \arrow[rru] &              &              
\end{tikzcd}
\]
of its elements. Therefore, we have the following limit diagram:
\[
\begin{tikzcd}
                 &                 & D(X) \arrow[lld] \arrow[ld] \arrow[rd] \arrow[rrd] &                 &                  \\
D(1) \arrow[rrd] & D(1) \arrow[rd] & \cdots                                             & D(1) \arrow[ld] & D(1) \arrow[lld] \\
                 &                 & D(0)                                               &                 &                 
\end{tikzcd}
\]
That is, an element of $D(X)$ is a family of elements $a_x \in D(1)$, one for each $x \in X$, such that the $D(0!)(a_x)$ are all equal in $D(0)$. But this is just a bundle map, i.e.
\[D(X) \cong \bun(X!, D(0!))\]
where $X!\colon X\to 1$ and $D(0!)\colon D(1)\to D(0)$. Thus by \cref{thm.equivs}, the functor $D$ is the Dirichlet polynomial associated to the bundle $D(0!)$.
\end{proof}

\section*{Acknowledgments}

The authors thank Joachim Kock, Andr\'{e} Joyal, and Brendan Fong for helpful
comments that improved the quality of this note. Spivak also appreciates support
by Honeywell Inc.\ as well as AFOSR grants FA9550-17-1-0058 and
FA9550-19-1-0113. Jaz Myers appreciates support by his advisor Emily Riehl and the National Science Foundation grant DMS-1652600.

\printbibliography

\end{document}